\newtheorem{theorem}{Theorem}[section]
\newtheorem{proposition}[theorem]{Proposition}
\newtheorem{lemma}[theorem]{Lemma}
\theoremstyle{definition}
\newtheorem{example}[theorem]{Example}
\theoremstyle{remark}
\newtheorem{remark}[theorem]{Remark}
\numberwithin{equation}{section}
\newcommand{\set}[1]{\left\{ #1 \right\}}
\newcommand{\C}{\mathbb{C}}
\newcommand{\R}{\mathbb{R}}
\newcommand{\Q}{\mathbb{Q}}
\newcommand{\Z}{\mathbb{Z}}
\newcommand{\N}{\mathbb{N}}
\newcommand{\f}{\infty}
\newcommand{\ep}{\varepsilon}
\newcommand{\p}{\mathfrak{p}}
\newcommand{\calO}{\mathcal{O}}
\newcommand{\ord}{\mathrm{Ord}}
\newcommand{\n}{\boldsymbol{n}}
\newcommand{\ii}{\mathrm{i}}
\title{Rational points in Cantor sets in the complex plane}
\author{Wenxia Li}
\address[Wenxia Li]{School of Mathematical Sciences, Key Laboratory of MEA (Ministry of Education) $\&$ Shanghai Key Laboratory of PMMP, East China Normal University, Shanghai 200241, People's Republic of China}
\email{wxli@math.ecnu.edu.cn}
\author{Zhiqiang Wang}
\address[Zhiqiang Wang]{College of Mathematics and Statistics, Center of Mathematics, Chongqing University, Chongqing 401331, People's Republic of China \& Department of Mathematics, University of British Columbia, Vancouver, British Columbia, V6T 1Z2, Canada}
\email{zhiqiangwzy@163.com,~zqwangmath@cqu.edu.cn}
\author{Jiuzhou Zhao}
\address[Jiuzhou Zhao]{School of Mathematics and Statistics, Key Laboratory of Engineering Modeling and Statistical Computation of Hainan Province, Hainan University, Haikou 570228, People’s Republic of China}
\email{zhao9zone@gmail.com}
\date{\today}
\subjclass[2020]{11A63, 28A80}
\begin{document}

\begin{abstract}
Let $K$ be an imaginary quadratic field and let $\mathcal{O}_K$ be the ring of algebraic integers of $K$.
For $\alpha \in \mathcal{O}_K$ with $|\alpha| > 1$, define \[ \mathcal{D}_\alpha = \bigcup_{n=0}^\f \frac{\mathcal{O}_K}{\alpha^n}. \]
For $\beta \in \mathcal{O}_K$ with $|\beta|>1$ and a finite subset $A \subset \mathcal{O}_K$, define \[ S_{\beta,A} = \bigg\{ \sum_{k=1}^{\f} \frac{a_k}{\beta^k}: \; a_k \in A \;\forall k \in \mathbb{N} \bigg\}. \]
Suppose that $\alpha$ and $\beta$ are relatively prime.
In this paper, we show that if $\dim_{\mathrm{H}} S_{\beta,A} < 1$, then the intersection $\mathcal{D}_\alpha \cap S_{\beta,A}$ is a finite set. In general, the threshold for the Hausdorff dimension of $S_{\beta,A}$ is sharp.
If we further assume that $\mathcal{O}_K$ is a unique factorization domain and that $\overline{\alpha}$ and $\alpha$ are relatively prime, then we establish the finiteness of the intersection under the weaker condition $\dim_{\mathrm{H}} S_{\beta,A} < 2$.
This extends the previously known results on the real line.
\end{abstract}
\keywords{radix expansion, Cantor set, imaginary quadratic field}

\maketitle

\section{Introduction}

\subsection{Rational points in Cantor sets}

Let $\N$ be the set of all positive integers, and let $\N_0:=\N\cup\set{0}$.  For $k\in\N$, let $\N_{\ge k}:=\N\cap[k,+\f)$.
Let $\#A$ denote the cardinality of a set $A$.
The greatest common divisor of $m,n \in \N$ is denoted by $\gcd(m,n)$.


Given $q\in \N_{\ge 3}$ and a finite set $A \subset \mathbb{Q}$ with $\#A \ge 2$,
we consider the \emph{iterated function system} (IFS) ${\mathcal F}_{q,A}=\big\{ \phi_a(x)=(x+a)/q: a\in A \big\}$. According to Hutchinson \cite{Hutchinson-1981}, there exists a unique non-empty compact subset $S_{q,A}$ of $\R$ such that \[ S_{q,A} = \bigcup_{a \in A} \phi_a \big( S_{q,A} \big), \]
which is called a \emph{self-similar set}. The set $S_{q,A}$ can be written algebraically as
\begin{equation*}
  S_{q,A}=\bigg\{\sum_{k=1}^{\infty} \frac{a_k}{q^{k}}:\;a_k \in A \;\;\forall k\in \N\bigg\}.
\end{equation*}
For $p\in \N_{\ge 2}$, let $D_p$ be the set of all rational numbers in $\mathbb {R}$ having a finite $p$-ary expansion. That is,
\begin{equation*}
D_p=\bigcup_{n=1}^\f \frac{\mathbb {Z}}{p^n},
\end{equation*}
which is a proper subset of $\mathbb Q$ and  is dense in $\mathbb R$.

The finiteness of the intersection of $D_p$ and $S_{q,A}$ under the assumption that $\gcd(p,q) = 1$, has been extensively studied \cite{Wall-1990,Nagy-2001,Bloshchitsyn-2015,Schleischitz-2021,Shparlinski-2021,Li-Li-Wu-2023,
JKLW-2024,Kong-Li-Wang-2025}.
In the solution to Problem H-339 in \cite{Whitney-Wall-1983}, Wall first showed that $$D_{2} \cap S_{3,\{0,2\}} = \bigg\{0, \frac{1}{4}, \frac{3}{4},1 \bigg\}.$$
And Wall \cite{Wall-1990} also proved that
$$D_{10} \cap S_{3,\{0,2\}} = \bigg\{ 0,\frac{1}{4}, \frac{3}{4}, \frac{1}{10}, \frac{3}{10}, \frac{7}{10}, \frac{9}{10}, \frac{1}{40}, \frac{3}{40}, \frac{9}{40}, \frac{13}{40}, \frac{27}{40}, \frac{31}{40}, \frac{37}{40}, \frac{39}{40},1  \bigg\}.$$
Note that $S_{3,\{0,2\}}$ is the classical middle third Cantor set.
Later, Nagy \cite{Nagy-2001} showed that the intersection $D_p \cap S_{3,\{0,2\}}$ is finite for each prime $p > 3$.
Bloshchitsyn \cite{Bloshchitsyn-2015} generalized this result and proved that if $p > q^2$ is a prime and $A \subset \{0,1,\ldots, q-1\}$ with $\#A< q$, then $D_p \cap S_{q,A}$ is a finite set.
The general result was proved by Schleischitz \cite[Corollary 4.4]{Schleischitz-2021}: if $\gcd(p,q)=1$, and $A \subset \{0,1,\ldots, q-1\}$ with $\#A< q$, then the intersection $D_p \cap S_{q,A}$ is finite, a stronger version of which was obtained by Shparlinski \cite{Shparlinski-2021} using a different method.
Recently, Kong, Li and the second author \cite{Kong-Li-Wang-2025} established the same conclusion for $A \subset \Q$.

\begin{theorem}[\cite{Schleischitz-2021,Shparlinski-2021,Kong-Li-Wang-2025}]
\label{theorem-real-line}
  Let $p\in \N_{\ge 2}$, $q \in \N_{\ge 3}$ and let $A \subset \Q$ be a finite set.
  Suppose that $\gcd(p,q)=1$.
  Then the intersection $D_p \cap S_{q,A}$ is finite if and only if $\dim_{\mathrm{H}} S_{q,A} < 1$.
\end{theorem}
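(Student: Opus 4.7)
The plan is to handle the two directions separately, with the sufficiency (dimension bound $\Rightarrow$ finiteness) being the substantive content.

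For necessity, since $S_{q,A} \subset \R$ we have $\dim_{\mathrm{H}} S_{q,A} \leq 1$, so I need to show that $\dim_{\mathrm{H}} S_{q,A} = 1$ forces $D_p \cap S_{q,A}$ to be infinite. In the main case this occurs because dimension $1$ (with any reasonable separation of the IFS) implies positive Lebesgue measure: at a Lebesgue density point $x_0 \in S_{q,A}$, the density of $S_{q,A}$ in any small neighborhood tends to $1$, and since the $\sim 2\delta p^n$ rationals of denominator $p^n$ in $(x_0 - \delta, x_0 + \delta)$ are equidistributed, a positive fraction land in $S_{q,A}$; letting $n \to \infty$ produces infinitely many elements of $D_p \cap S_{q,A}$. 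The borderline case of dimension $1$ with zero measure requires a Frostman-type mass distribution argument on $S_{q,A}$ to produce $D_p$-approximants at each scale.

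For sufficiency, first reduce to $A \subset \Z$ by clearing the common denominator of $A$ (preserving $\gcd(p,q)=1$ and changing the intersection only up to an affine rescaling). Set $s := \dim_{\mathrm{H}} S_{q,A} < 1$. The key arithmetic ingredient is that $\gcd(p,q)=1$ forces every $x = m/p^n \in D_p$ to have an eventually periodic $q$-ary expansion, because the orbit of $q^k m \bmod p^n$ in $\Z/p^n\Z$ is purely periodic (since $q$ is invertible mod $p^n$) with period dividing $\mathrm{ord}_{p^n}(q)$. Whenever such an $x$ also lies in $S_{q,A}$, every digit of its expansion lies in $A$, so the purely periodic tail is a rational $y = u/(q^T - 1) \in S_{q,A}$ for some period $T$, and the preperiodic block of length $M$ also has digits in $A$.

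The heart of the proof is then to bound $M$ and $T$ uniformly. I would use a two-sided counting argument: on one hand, the self-similar structure and the dimension bound $s < 1$ give that the number of purely periodic points in $S_{q,A}$ of period $T$ is at most $(\#A)^T \lesssim q^{Ts}$; on the other hand, distinct rationals of the form $u/(q^T-1)$ are separated by at least $\sim q^{-T}$, and for those arising from $x = m/p^n$ the divisibility relation $p^n \mid q^T - 1$ (or a refinement thereof, accounting for the preperiodic correction) forces $T$ to grow at least linearly in $n$. Balancing these constraints against $s < 1$ should bound $p^n$, hence the number of possible $x$.

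The principal obstacle is in the final counting step, which must be carried out \emph{uniformly} across all finite $A \subset \Q$, in particular without assuming the open set condition or any explicit formula for $\dim_{\mathrm{H}} S_{q,A}$. When overlaps are present the gap between $\dim_{\mathrm{H}} S_{q,A}$ and $\log(\#A)/\log q$ obstructs the direct cylinder-counting step, and some substitute is needed to translate the dimension bound into an effective upper count of periodic points. I would expect to invoke a Diophantine approximation bound on $S_{q,A}$ in the spirit of \cite{Schleischitz-2021}, or an exponential-sum/pigeonhole argument along the lines of \cite{Shparlinski-2021,Kong-Li-Wang-2025}, to close this gap.
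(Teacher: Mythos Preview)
Your outline for the sufficiency direction has the right architecture---reduce to $A\subset\Z$, observe that $\gcd(p,q)=1$ forces any $m/p^n\in S_{q,A}$ to have an eventually periodic coding, then play an upper bound on the period against the lower bound $\mathrm{ord}_{p^n}(q)\gtrsim p^n$---but you stop exactly at the point where the argument needs its one nontrivial idea. Your proposed upper bound ``number of period-$T$ points $\le(\#A)^T\lesssim q^{Ts}$'' runs in the wrong direction (one always has $s\le\log(\#A)/\log q$, with strict inequality precisely when overlaps cause a dimension drop), and you correctly flag that overlaps obstruct any count based on $\#A$. The resolution, which is Proposition~\ref{lem:order-upper-bound} here (following \cite{Schleischitz-2021,Kong-Li-Wang-2025}), bypasses the digit set entirely: if $\xi=v/u\in S_{q,A}$ with $u,v\in\Z$, the forward orbit $\xi_0=\xi$, $\xi_n=\varphi_{a_n}^{-1}(\xi_{n-1})=q\xi_{n-1}-a_n$ stays in $S_{q,A}$ and consists of rationals with denominator $u$, hence any two distinct orbit points are at least $1/|u|$ apart. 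Since $\dim_{\mathrm B}S_{q,A}=\dim_{\mathrm H}S_{q,A}=s$ for self-similar sets, the covering number satisfies $N_{1/(3|u|)}(S_{q,A})\lesssim|u|^{s+\varepsilon}$, so the orbit has at most $c_1|u|^{s+\varepsilon}$ distinct points and therefore becomes periodic with period at most $c_1|u|^{s+\varepsilon}$. With $u=p^n$ this gives period $\lesssim p^{n(s+\varepsilon)}$; the divisibility you identified gives period $\ge\mathrm{ord}_{p^n}(q)\gtrsim p^n$; and $s+\varepsilon<1$ then bounds $n$. In the paper this is packaged as the real case of Theorem~\ref{theorem-complex-plane}(i) (Remark~(d)).

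On necessity: the paper does not prove this direction (it is simply cited), but your Lebesgue-density argument is incorrect even in the positive-measure case. Density $1$ at $x_0$ is a statement about measure and says nothing about whether a prescribed countable set such as $\{m/p^n\}$ meets $S_{q,A}$ near $x_0$; indeed, removing $D_p$ from an interval yields a full-measure closed set disjoint from $D_p$. The route taken in the references is rather to show that $\dim_{\mathrm H}S_{q,A}=1$ forces $S_{q,A}$ to contain an interval, after which density of $D_p$ in $\R$ gives infinitely many intersection points immediately.
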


Here, $\dim_{\mathrm{H}} E$ denotes the Hausdorff dimension of a set $E$, and we refer the reader to \cite{Falconer2014} for the definition and properties of Hausdorff dimension.
It is worth pointing out that Theorem \ref{theorem-real-line} has been applied to study the spectral eigenvalue problems for self-similar spectral measures \cite{Kong-Li-Wang-2025,Wang-2025}.

The main purpose of this paper is to extend the sufficiency part of Theorem \ref{theorem-real-line} to the complex plane.

\subsection{Algebraic number theory}

Here, we recall some basic notions in algebraic number theory, and we refer the reader to \cite{book-ANT-2022} for further details.

Let $K$ be an algebraic number field, and let $\mathcal{O}_K$ denote the ring of algebraic integers of $K$.
Note that $\mathcal{O}_K$ is a \emph{Dedekind domain}, which means that every non-zero proper ideal of $\mathcal{O}_K$ can be written uniquely as a product of prime ideals of $\mathcal{O}_K$ except for the order of factors (cf. \cite[Section 3.2]{book-ANT-2022}).

Let $R$ be an integral domain and let $I$ be a non-zero ideal of $R$.
For $a, b \in R$, we say that $a$ is \emph{congruent to $b$ module $I$}, denoted by $a \equiv b \pmod{I}$, if $a-b \in I$.
Let $(R/I)^\times$ denote the unit group of the quotient ring $R/I$, which consists of all invertible elements for the multiplication in $R/I$.
For two ideals $I_1,I_2$ of $R$, we say that \emph{$I_1$ divides $I_2$}, denoted by $I_1 \mid I_2$, if there exists an ideal $I_3$ of $R$ such that $I_2 = I_1 I_3$.
For two ideals $I_1,I_2$ of $\mathcal{O}_K$, we have $I_1 \mid I_2$ if and only if $I_2 \subset I_1$ (cf. \cite[Theorem 3.19 (i)]{book-ANT-2022}).
Two ideals $I_1,I_2$ of $\mathcal{O}_K$ are said to be \emph{relatively prime} if $I_1 + I_2 = \mathcal{O}_K$.
Two elements $\alpha,\beta \in \calO_K$ are said to be relatively prime if $\alpha \calO_K$ and $\beta\calO_K$ are relatively prime, i.e., $(\alpha \calO_K)+(\beta\calO_K) = \calO_K$.

Let $I$ be a non-zero ideal of $\mathcal{O}_K$. The quotient ring $\mathcal{O}_K / I$ is always finite (cf. \cite[Example 3.25]{book-ANT-2022}), and the number of elements of $\calO_K/I$ is called \emph{the norm of $I$}, denoted by $N(I)$.
So $(\mathcal{O}_K / I)^\times$ is a finite group.
For $\beta \in \calO_K$, the notation $\beta \in (\mathcal{O}_K / I)^\times$ means that the equation $\beta x \equiv 1 \pmod{I}$ is solvable in $\mathcal{O}_K$. Then \emph{the order of $\beta$ modulo $I$}, denoted by $\mathrm{Ord}_I(\beta)$, is defined to be the smallest integer $n \in \N$ such that $\beta^n \equiv 1 \pmod{I}$.

Let $\mathfrak{p}$ be a non-zero prime ideal of $\mathcal{O}_K$.
Then $\mathcal{O}_K / \mathfrak{p}$ is a finite field.
So $\mathfrak{p}$ contains a unique prime $p \in \N$ which is the characteristic of the finite field $\mathcal{O}_K / \mathfrak{p}$.
It follows that $\mathfrak{p}$ contains $p \mathcal{O}_K$ and hence $\mathfrak{p} \mid (p \mathcal{O}_K)$.
The largest integer $e \in \N$ such that $\mathfrak{p}^e \mid (p \mathcal{O}_K)$ is called the \emph{index of ramification of $\mathfrak{p}$}.
Note that $N(\mathfrak{p}) = \# \big( \mathcal{O}_K / \mathfrak{p} \big) = p^f$ for some $f \in \N$. The positive integer $f$ is called the \emph{residual degree of $\mathfrak{p}$}.

\subsection{Main results}

Let $K = \Q(\sqrt{d})$ be an imaginary quadratic field, where $d$ is a negative square free integer.
By \cite[Theorem 2.9]{book-ANT-2022}, we have
\begin{equation}\label{eq:O-K}
\mathcal{O}_K =
\begin{cases}
	\Z[\sqrt{d}], & \text{if}\;\; d \equiv 2 \;\text{or}\; 3 \pmod{4}; \\
	\Z\big[ \frac{1+\sqrt{d}}{2} \big], & \text{if}\;\; d \equiv 1 \pmod{4}.
\end{cases}
\end{equation}
It is easy to check that for each $z \in \calO_K$, we have $|z|^2 \in \Z$.

For $\beta \in \mathcal{O}_K$ with $|\beta|>1$ and a finite subset $A \subset \mathcal{O}_K$  with $\#A\ge 2$, let $S_{\beta,A}$ be the self-similar set in the complex plane generated by the IFS \[ \mathcal{F}_{\beta,A} = \Big\{ \varphi_a(z) = \frac{z+a}{\beta}: a \in A  \Big\}. \]
Similarly, the set $S_{\beta,A}$ can be written as
\begin{equation}\label{eq:S-beta}
  S_{\beta,A} = \bigg\{ \sum_{k=1}^{\f} \frac{a_k}{\beta^k}: \; a_k \in A \;\forall k \in \N \bigg\}.
\end{equation}
For $\alpha \in \mathcal{O}_K$ with $|\alpha| > 1$, define \[ \mathcal{D}_\alpha = \bigcup_{n=1}^\f \frac{\mathcal{O}_K}{\alpha^n}. \]
Let $\overline{z}$ denote the complex conjugate of $z \in \C$.
Our main theorem is the following.

\begin{theorem}\label{theorem-complex-plane}
  Let $K$ be an imaginary quadratic field.
  Let $\alpha, \beta \in \calO_K$ with $|\alpha| > 1$ and $|\beta|>1$.
  Let $A \subset \calO_K$ be a finite set with $\# A \ge 2$.
  Suppose that $\alpha$ and $\beta$ are relatively prime.

  {\rm(i)} If $\dim_{\mathrm{H}} S_{\beta,A} < 1$, then we have $ \#\big( \mathcal{D}_\alpha \cap S_{\beta,A} \big) < + \f$.

  {\rm(ii)} Suppose further that $\calO_K$ is a unique factorization domain, and that $\overline{\alpha}$ and $\alpha$ are relatively prime. If $\dim_{\mathrm{H}} S_{\beta,A} < 2$, then we have $\#\big( \mathcal{D}_\alpha \cap S_{\beta,A} \big) < + \f$.
\end{theorem}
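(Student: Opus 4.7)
\emph{Orbit dynamics.} Given $x \in \mathcal{D}_\alpha \cap S_{\beta,A}$ with $\alpha^n x \in \calO_K$, fix a coding $x = \sum_{k \ge 1} a_k/\beta^k$ with $a_k \in A$, and set $r_N := \beta^N x - \sum_{k=1}^N a_k \beta^{N-k} = \sum_{k \ge 1} a_{N+k}/\beta^k \in S_{\beta,A}$. The identity $\alpha^n r_N = \beta^N(\alpha^n x) - \alpha^n \sum_{k=1}^N a_k \beta^{N-k} \in \calO_K$ shows that $r_N \in \mathcal{D}_\alpha \cap S_{\beta,A}$ with the \emph{same} denominator $\alpha^n$. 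Since $\alpha^n S_{\beta,A}$ is bounded and $\calO_K \subset \C$ is a discrete lattice, $\alpha^n r_N$ takes only finitely many values, so $(r_N)_{N \ge 0}$ is eventually periodic: $r_{N_0+p} = r_{N_0}$ for some $N_0 \ge 0,\; p \ge 1$, giving
$$ x \;=\; \sum_{k=1}^{N_0} \frac{a_k}{\beta^k} \;+\; \frac{1}{\beta^{N_0}(\beta^p - 1)} \sum_{j=1}^{p} a_{N_0+j}\,\beta^{p-j}. $$

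\emph{Arithmetic growth of the period.} The denominator (fractional) ideal of $x$ in this representation divides both $\alpha^n \calO_K$ and $\beta^{N_0}(\beta^p - 1)\calO_K$, hence divides their ideal gcd; and since $\gcd(\alpha,\beta)=1$ forces $\gcd(\alpha^n, \beta^{N_0}) = \calO_K$, it actually divides $\gcd(\alpha^n, \beta^p - 1)\calO_K$. For each fixed $n$, the set $\alpha^{-n}\calO_K \cap S_{\beta,A}$ is finite, so if $\mathcal{D}_\alpha \cap S_{\beta,A}$ were infinite, the exponent of \emph{some} prime $\mathfrak{p} \mid \alpha$ in the denominator ideal of $x$ would be unbounded across the intersection; for such $\mathfrak{p}$, arbitrarily large $k$ with $\mathfrak{p}^k \mid (\beta^p - 1)$ would occur, and since $\ord_{\mathfrak{p}^k}(\beta) \to \infty$ by the structure of $(\calO_K/\mathfrak{p}^k)^\times$, the period $p$ would tend to infinity.

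\emph{Part~(i): covering estimate.} Each purely periodic tail $r_{N_0}$ is the unique fixed point of the contraction $\phi_{a_{N_0+1}} \circ \cdots \circ \phi_{a_{N_0+p}}$ on $\C$, of ratio $|\beta|^{-p}$, and every such fixed point lies in $S_{\beta,A}$. As $p$ grows, distinct admissible codings produce a rich family of such fixed points packed into $S_{\beta,A}$ at scale $|\beta|^{-p}$. A Moran/Hutchinson-type covering estimate, carefully tracking the arithmetic subshift of codings satisfying the divisibility $\mathfrak{p}^k \mid (\beta^p - 1)$ and absorbing the $\#A^{N_0}$ choices of the pre-periodic block, is expected to yield a lower bound $\dim_{\mathrm H} S_{\beta,A} \ge 1$, contradicting the hypothesis.

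\emph{Part~(ii) and main obstacle.} Under the UFD assumption together with $\gcd(\alpha,\overline{\alpha}) = 1$, one has $\mathcal{D}_\alpha \cap \mathcal{D}_{\overline{\alpha}} = \calO_K$: any common element has denominator ideal dividing both $(\alpha)^n$ and $(\overline{\alpha})^m$, which by coprimality must be trivial. For $x \in \mathcal{D}_\alpha \cap S_{\beta,A}$, if additionally $\overline{x} \in \mathcal{D}_\alpha$, then $x \in \calO_K \cap S_{\beta,A}$, a finite set; otherwise the pair $(x, \overline{x})$ sits in $(\mathcal{D}_\alpha \times \mathcal{D}_{\overline{\alpha}}) \cap (S_{\beta,A} \times S_{\overline{\beta},\overline{A}})$, and a planar version of the part~(i) covering estimate — effectively doubling the ambient dimension that is exploited — then upgrades $\dim_{\mathrm H} S_{\beta,A} < 2$ into the required finiteness. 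The principal obstacle, present in both parts, is executing the Moran/Hutchinson-type estimate that converts the arithmetic growth $p \to \infty$ into a genuine lower bound on $\dim_{\mathrm H} S_{\beta,A}$; in part~(ii) the additional difficulty is assembling the two-dimensional analogue, where precisely the conjugation symmetry and the $\gcd(\alpha,\overline{\alpha})=1$ hypothesis allow $\overline{x}$ to serve as a genuinely independent variable.
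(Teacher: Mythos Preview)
Your orbit--dynamics paragraph and the ``arithmetic growth of the period'' paragraph are correct and match the paper: every $x\in\mathcal{D}_\alpha\cap S_{\beta,A}$ has an eventually periodic coding, and if a prime power $\mathfrak p^{k}$ exactly divides the denominator ideal of $x$ then $\mathfrak p^{k}\mid(\beta^{p}-1)$, so $p\ge\mathrm{Ord}_{\mathfrak p^{k}}(\beta)\to\infty$. What is genuinely missing is the step you yourself flag as the ``principal obstacle'': an \emph{upper} bound on $p$ in terms of the denominator and the Hausdorff dimension.

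The paper does not produce a Moran/Hutchinson lower bound on $\dim_{\mathrm H}S_{\beta,A}$ from many periodic points, as you propose. Instead it observes something much simpler that is already implicit in your first paragraph: the orbit $\{r_{N}\}_{N\ge 0}\subset S_{\beta,A}$ consists of points of the form $v_{N}/u$ with $v_{N}\in\calO_{K}$, hence any two distinct orbit points are at distance $\ge 1/|u|$. Since $\dim_{\mathrm B}S_{\beta,A}=\dim_{\mathrm H}S_{\beta,A}=s$, a $1/|u|$--separated subset of $S_{\beta,A}$ has at most $c_{1}|u|^{s+\varepsilon}$ elements, and therefore the period satisfies $p\le c_{1}|u|^{s+\varepsilon}$. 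Combined with the lower bound $p\ge\mathrm{Ord}_{\prod\mathfrak p_{j}^{n_{j}}}(\beta)\ge c_{2}\prod_{p}p^{\max\{\lceil n_{j}/e_{j}\rceil\}}$, and taking $u$ to be the rational integer $\prod_{p}p^{\max\{\lceil n_{j}/e_{j}\rceil\}}$, one gets $c_{2}u\le c_{1}u^{s+\varepsilon}$, a contradiction for $s+\varepsilon<1$ once $u$ is large. Your ``rich family of fixed points packed at scale $|\beta|^{-p}$'' is not the right picture: it is the \emph{single} orbit $\{r_{0},\dots,r_{p-1}\}$ that already supplies $p$ separated points, and the scale is $1/|u|$, not $|\beta|^{-p}$.

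For part~(ii) the paper again follows a different and more direct route than your $(x,\bar x)$ idea. The hypotheses ``$\calO_{K}$ is a UFD'' and ``$\gcd(\alpha,\bar\alpha)=1$'' are used to ensure that every prime $\mathfrak p_{j}\mid\alpha$ is principal, say $\mathfrak p_{j}=\alpha_{j}\calO_{K}$, unramified, and of residual degree one, so that $|\alpha_{j}|^{2}=N(\mathfrak p_{j})=p_{j}$. Then $z$ can be written with the \emph{smaller} denominator $u=\alpha_{1}^{n_{1}}\cdots\alpha_{\ell}^{n_{\ell}}\in\calO_{K}$, giving $|u|^{2}=p_{1}^{n_{1}}\cdots p_{\ell}^{n_{\ell}}$. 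The same box-counting bound now reads $p\le c_{1}|u|^{s+\varepsilon}$, while the order lower bound is $p\ge c_{2}\,p_{1}^{n_{1}}\cdots p_{\ell}^{n_{\ell}}=c_{2}|u|^{2}$; comparing gives a contradiction whenever $s+\varepsilon<2$. No two-variable or planar argument is needed.
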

\begin{remark}
  (a) Note that $K$ is the quotient field of $\calO_K$ (cf. \cite[Theorem 1.6 (iii)]{book-ANT-2022}).
  For a finite subset $A \subset K$, we can find $0 \ne \theta \in \calO_K$ such that $\theta A \subset \calO_K$.
  Note that \[ \mathcal{D}_\alpha \cap S_{\beta,A} = \frac{1}{\theta} \Big( \big( \theta \mathcal{D}_\alpha \big) \cap S_{\beta,\theta A} \Big) \subset \frac{1}{\theta} \big(  \mathcal{D}_\alpha \cap S_{\beta,\theta A} \big). \]
  Thus, Theorem \ref{theorem-complex-plane} also holds for any finite subset $A \subset K$.

  (b) For $\alpha, \beta \in \calO_K$ with $|\alpha|,|\beta| > 1$, if $\gcd(|\alpha|^2,|\beta|^2)=1$, then we have $(\alpha\overline{\alpha}\calO_K) + (\beta\overline{\beta} \calO_K) = \calO_K$.
  It follows that $(\alpha\calO_K) + (\beta\calO_K) = \calO_K$.
  Thus, we obtain that $\alpha$ and $\beta$ are relatively prime in $\calO_K$.

  (c) For an imaginary quadratic field $K=\Q(\sqrt{d})$, where $d$ is a negative square free integer, $\mathcal{O}_K$ is a unique factorization domain only for $d=-1,-2,-3,-7,-11,-19,-43,-67,-163$ (cf. \cite[Remark 3.6]{book-ANT-2022}).

  (d) Let $K = \Q(\ii)$, where $\ii = \sqrt{-1}$ denotes the imaginary unit. Then $\calO_K = \Z[\ii]=\big\{n+ m \ii: n,m \in \Z \}$ is the ring of Gaussian integers.
  Let $\alpha = p \in \N_{\ge 2}$, $\beta=q \in \N_{\ge 3}$, and $A \subset \Q$ a finite set.
  Note that $S_{q,A} \subset \R$. So we have $\mathcal{D}_p \cap S_{q,A} = D_p \cap S_{q,A}$.
  If $p$ and $q$ are coprime in $\N$, then $p$ and $q$ are also relatively prime in $\calO_K$.
  If, moreover, $\dim_{\mathrm{H}} S_{q,A} < 1$, then by Theorem \ref{theorem-complex-plane} (i) and the remark (a), we conclude that \[ \#\big( D_p \cap S_{q,A} \big) < + \f. \]
  Thus, the sufficiency part of Theorem \ref{theorem-real-line} is a corollary of Theorem \ref{theorem-complex-plane} (i).
  This also shows that the threshold for the Hausdorff dimension of $S_{\beta,A}$ in Theorem \ref{theorem-complex-plane} (i) is sharp.

  (e) The proof of Theorem \ref{theorem-complex-plane} follows the spirit of \cite{Schleischitz-2021,Kong-Li-Wang-2025}. Here, $\calO_K$ is not a unique factorization domain in general. We need to deal with the prime ideal factorization.

  (f) While preparing this manuscript, we became aware that Wu \cite{Wu-2025+} independently proved Theorem \ref{theorem-complex-plane} (i) in the setting $K=\Q(\ii)$ and $\calO_K = \Z[\ii]$.
\end{remark}

Finally, we give a concrete example in the Gaussian integer ring $\Z[\ii]$.
For $\theta= -n +\ii$ with $n \in \N$, we have a canonical number system $\big( \theta, \{0,1,\ldots, n^2\} \big)$ in $\Z[\ii]$ \cite{Katai-Szabo-1975}.
This means that every Gaussian integer $\gamma \in \Z[\ii]$ can be written uniquely as
\[ \gamma = \sum_{k=0}^{\ell} \xi_k \theta^k \;\;\text{with}\;\; \xi_0,\xi_1, \ldots, \xi_\ell \in \{0,1,\ldots, n^2\}.\]

\begin{example}
  Let $\alpha= -n + \ii$ with $n \in \N$.
  Then we have \[ \mathcal{D}_\alpha = \bigg\{ \sum_{k=-\ell}^{\ell} \xi_k \alpha^{k}: \;\ell \in \N,\; \xi_k \in \{0,1,\ldots,n^2\} \; \forall -\ell \le k \le \ell  \bigg\}. \]
  Suppose that $n$ is an even integer.
  This implies that $\alpha$ and $\overline{\alpha}$ are relatively prime in $\Z[\ii]$.
  Let $\beta = -m + \ii$ with $m \in \N$, and let $A$ be a proper subset of $\{ 0,1,\ldots, m^2 \}$. Then we have \[ \dim_{\mathrm{H}} S_{\beta,A} = \frac{\log \# A}{\log |\beta|} < 2.  \]
  Suppose that $\gcd(n^2+1,m^2 +1)=1$, which implies that $\alpha$ and $\beta$ are relatively prime in $\Z[\ii]$.
  For instance, we may take $\alpha = -4+\ii$, $\beta=-2+\ii$, and $A=\{0,1,2,3\}$.
  Then by Theorem \ref{theorem-complex-plane} (ii), the intersection $\mathcal{D}_\alpha \cap S_{\beta,A}$ is a finite set.
\end{example}

The rest of paper is organized as follows.
The proof of Theorem \ref{theorem-complex-plane} relies on two key propositions, which will be proved in Section \ref{sec:upper-bound} and \ref{sec:lower-bound}, respectively.
We finish the proof of Theorem \ref{theorem-complex-plane} in Section \ref{sec:proof}.

\section{Upper bound of period length for eventually periodic coding}\label{sec:upper-bound}

In this section, let $K$ be an imaginary quadratic field and let $\mathcal{O}_K$ be the ring of algebraic integers of $K$.
By (\ref{eq:O-K}), for any non-zero element $z \in \mathcal{O}_K$ we have $|z| \ge 1$.
This property plays an important role in the following proposition.

By (\ref{eq:S-beta}), each element $z\in S_{\beta,A}$ has at least one expression of the form
\begin{equation}\label{eq:coding}
  z= \sum_{k=1}^{\f} \frac{a_k}{\beta^k} \;\;\text{with each}\; a_k \in A.
\end{equation}
The infinite sequence $(a_k)_{k =1}^\f$ in (\ref{eq:coding}) is called a \emph{coding} of $z$.
The following proposition is inspired by \cite[Theorem 4.3]{Schleischitz-2021} and \cite[Lemma 2.3]{Kong-Li-Wang-2025}.

\begin{proposition}\label{lem:order-upper-bound}
Let $\beta \in \mathcal{O}_K$ with $|\beta|>1$ and let $A \subset \mathcal{O}_K$ be a finite subset with $\#A\ge 2$.
Write $s=\dim_{\mathrm{H}} S_{\beta,A}$.
Then for $\ep>0$ there exists a constant $c_1=c_1(\ep)>0$ such that any complex number $\xi = v/u \in S_{\beta,A}$ with $v,u \in \mathcal{O}_K$ admits an eventually periodic coding with the period length $\le c_1 |u|^{s+\ep}.$
\end{proposition}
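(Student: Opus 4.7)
The plan is a two-step argument: first convert the problem of finding an eventually periodic coding into a pigeonhole problem on a finite set, then use the Hausdorff-dimension hypothesis to bound the size of that set.

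Fix an arbitrary coding $(a_k)_{k \ge 1}$ of $\xi = v/u$ and, for $n \in \N_0$, set
\[
  \xi_n := \sum_{k=1}^{\f} \frac{a_{n+k}}{\beta^k} = \beta^n \xi - \sum_{k=1}^n a_k \beta^{n-k}.
\]
Each $\xi_n$ lies in $S_{\beta,A}$, while $u \xi_n = \beta^n v - u \sum_{k=1}^n a_k \beta^{n-k}$ lies in $\calO_K$, so $\xi_n \in S_{\beta,A} \cap (1/u) \calO_K$. Writing $N := \#\big( S_{\beta,A} \cap (1/u) \calO_K \big)$, the $N+1$ points $\xi_0, \xi_1, \ldots, \xi_N$ cannot all be distinct, so $\xi_m = \xi_n$ for some $0 \le m < n \le N$. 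Expanding $\xi_m = \xi_n$ and isolating the first $n-m$ terms of the series for $\xi_m$ yields the identity $\xi_m \big( 1 - \beta^{-(n-m)} \big) = \sum_{k=1}^{n-m} a_{m+k} \beta^{-k}$; equivalently, the purely periodic series whose period block is $(a_{m+1}, \ldots, a_n)$ sums to $\xi_m$. Replacing the tail $(a_{m+k})_{k \ge 1}$ of the original coding by this periodic repetition therefore still produces a coding of $\xi$, now eventually periodic with preperiod $m$ and period $n - m \le N$. Hence it suffices to prove $N \le c_1 |u|^{s+\ep}$.

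For this bound I would combine two geometric ingredients. First, because every nonzero $z \in \calO_K$ satisfies $|z| \ge 1$, the lattice $(1/u)\calO_K$ has minimum separation at least $1/|u|$, so any set of diameter strictly less than $1/|u|$ contains at most one of its points. Second, since $\dim_{\mathrm{H}} S_{\beta,A} = s$ and $S_{\beta,A}$ is a self-similar set in $\C$, I would invoke the standard fact that for self-similar attractors in Euclidean space the upper box-counting dimension coincides with the Hausdorff dimension; this supplies, for every $\ep > 0$, a threshold $\delta_0 = \delta_0(\ep) > 0$ such that $S_{\beta,A}$ can be covered by at most $\delta^{-(s+\ep)}$ closed balls of radius $\delta$ whenever $\delta < \delta_0$. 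Applying this cover with $\delta = 1/(2|u|)$ and using the first ingredient yields $N \le (2|u|)^{s+\ep}$ for all sufficiently large $|u|$, and the finitely many remaining values of $|u|$ are absorbed into $c_1$.

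The main obstacle is the covering step, namely turning the Hausdorff-dimensional information on $S_{\beta,A}$ into an effective cover at the prescribed scale $1/|u|$. The naive cover by level-$n$ IFS cylinders (of which there are $(\#A)^n$, each of diameter at most a constant multiple of $|\beta|^{-n}$) only delivers the similarity-dimension exponent $\log\#A / \log|\beta|$, which can strictly exceed $s$ when the IFS $\mathcal{F}_{\beta,A}$ has overlaps. To reach the sharp exponent $s = \dim_{\mathrm{H}} S_{\beta,A}$ one must either cite the equality of Hausdorff and upper box-counting dimensions for self-similar attractors, or extract the required covering estimate directly from the Hausdorff-dimension assumption by a more refined subdivision argument along the lines of \cite{Schleischitz-2021,Kong-Li-Wang-2025}. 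Once this covering estimate is in hand, the pigeonhole construction above produces the claimed eventually periodic coding.
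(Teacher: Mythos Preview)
Your proposal is correct and matches the paper's proof almost verbatim: both form the shift orbit $\xi_n \in S_{\beta,A} \cap (1/u)\calO_K$, invoke the $1/|u|$-separation of this set together with the equality $\dim_{\mathrm{B}} S_{\beta,A} = \dim_{\mathrm{H}} S_{\beta,A}$ for self-similar sets (the paper simply cites \cite[Corollary 3.3]{Falconer1997}, which disposes of the ``obstacle'' you flag), and then apply pigeonhole. The only nit is that a closed ball of radius $1/(2|u|)$ has diameter exactly $1/|u|$ and can contain two lattice points at that distance; the paper takes radius $1/(3|u|)$ to avoid this, and you should do the same.
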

\begin{proof}
Let $(a_k)_{k=1}^\f \in A^\N$ be a coding of $\xi$, i.e.,
\[\xi=\sum_{k=1}^{\f} \frac{a_k}{\beta^k}.\]
Consider the sequence $\{\xi_n\}_{n=0}^\f$ defined by
\begin{equation}\label{eq:sequence-xi}
\xi_0:=\xi,\quad \textrm{and} \quad\xi_n:=\varphi_{a_n}^{-1}(\xi_{n-1})= \beta \xi_{n-1} - a_n \;\;\text{for}\; n\in \N.
\end{equation}
Then we have \[ \xi_n = \sum_{k=1}^{\f} \frac{a_{n+k}}{\beta^k} \in S_{\beta,A} \quad \forall n \in \N_0. \]

Note that $\beta \in \mathcal{O}_K$ and $A \subset \mathcal{O}_K$.
By (\ref{eq:sequence-xi}) we can recursively show that $\xi_n$ has the form
$$\xi_n=\frac{v_n}{u} \quad\text{for some}\; v_n \in \mathcal{O}_K. $$
Let $\mathcal{C}:=\{\xi_n:n\in \N_0 \}$.
Note that for any non-zero element $z \in \mathcal{O}_K$ we have $|z| \ge 1$.
Thus for any two distinct elements $z_1, z_2 \in \mathcal{C}$, we have
\begin{equation}\label{eq:gap}
|z_1-z_2 |\ge \frac{1}{|u|}.
\end{equation}

For $\delta>0$ and a bounded set $F$, let $N_\delta(F)$ denote the smallest number of closed balls of radius $\delta$ that cover $F$.
The box-counting dimension of $F$ is defined by \[ \dim_{\mathrm{B}} E = \lim_{\delta \to 0^+} \frac{\log N_\delta(E)}{-\log \delta}\] if this limit exists (see \cite{Falconer2014} for more details).
Since $S_{\beta,A}$ is a self-similar set, by \cite[Corollary 3.3]{Falconer1997} we have \[ \dim_{\mathrm{B}} S_{\beta,A} = \dim_{\mathrm{H}} S_{\beta,A} =s. \]
For $\ep>0$, by the definition of box-counting dimension, there exists $\delta_0>0$ such that \[ N_\delta\big( S_{\beta,A} \big) \le \frac{1}{\delta^{s+\ep}}\quad \forall 0< \delta < \delta_0. \]
Thus, there is a constant $c=c(\ep)$ such that
\begin{equation}\label{eq:N-delta-bound}
  N_\delta\big( S_{\beta,A} \big) \le \frac{c}{\delta^{s+\ep}}\quad \forall 0< \delta < 1.
\end{equation}

Note that $\mathcal{C} \subset S_{\beta,A}$.
By (\ref{eq:gap}) we have \[ N_{\frac{1}{3|u|}} \big( S_{\beta,A} \big) \ge N_{\frac{1}{3|u|}} \big( \mathcal{C} \big) \ge \# \mathcal{C}. \]
This together with (\ref{eq:N-delta-bound}) implies that $\# \mathcal{C} \le c (3|u|)^{s+\ep}$.
That is, there exists a constant $c_1>0$ depending on $\ep$ such that
$$\#\mathcal{C} \le c_1|u|^{s+\ep}.$$
Thus, there are $0\le k < n \le c_1 |u|^{s+\ep}$ such that $\xi_{k} = \xi_{n}$. By (\ref{eq:sequence-xi}) it yields that
$$\xi_k =\varphi_{a_{k+1}}\circ \varphi_{a_{k+2}}\circ \cdots \circ \varphi_{a_n} (\xi_n)=\varphi_{a_{k+1}}\circ \varphi_{a_{k+2}}\circ \cdots \circ \varphi_{a_n} (\xi_k).$$
This means that $\xi_k$ has a periodic coding $(a_{k+1}a_{k+2}\ldots a_{n})^{\f}$.
It follows that $\xi$ has a eventually periodic coding $a_1a_2\ldots a_k (a_{k+1}a_{k+2}\ldots a_{n})^{\f}$ with the period length $n-k \le c_1 |u|^{s+\ep}$.
\end{proof}

\section{Lower bound of the order}\label{sec:lower-bound}

In this section, let $K$ be an algebraic number field and let $\mathcal{O}_K$ be the ring of algebraic integers of $K$.

Let $\mathfrak{p}$ be a non-zero prime ideal of $\mathcal{O}_K$ and $\beta \in \calO_K \setminus \p$.
For any $n \in \N$, suppose that $\beta \calO_K + \p^n \ne \calO_K$.
Then we can find a non-zero prime ideal $\p'$ of $\mathcal{O}_K$ such that $\beta \calO_K + \p^n \subset \p'$.
Note that $\p^n \subset \beta \calO_K + \p^n \subset \p'$.
This implies that $\p' = \p$. So we have $\beta \calO_K + \p^n \subset \p$.
It follows that $\beta \in \p$, a contradiction.
Thus, we conclude that $\beta \calO_K + \p^n = \calO_K$ for any $n \in \N$.
This means that the equation $\beta x \equiv 1 \pmod{\p^n}$ is solvable in $\calO_K$. That is, $\beta \in (\calO_K / \p^n)^\times$ and the order $\ord_{\p^n}(\beta)$ is well defined for any $n \in \N$.

Let $\p_1, \p_2, \ldots, \p_k$ be distinct non-zero prime ideals of $\mathcal{O}_K$ and $\beta \in \calO_K \setminus \bigcup_{j=1}^k \p_j$.
For any $n_j \in \N$, we have $\beta \in (\calO_K / \p_j^{n_j})^\times$.
Note that $\calO_K$ is a Dedeking domain.
By the Chinese remainder theorem (cf. \cite[page 60]{book-ANT-2022}), we have \[ \calO_K / \p_1^{n_1} \p_2^{n_2} \ldots \p_k^{n_k} \cong \calO_K / \p_1^{n_1} \oplus  \calO_K / \p_2^{n_2} \oplus \cdots \oplus \calO_K / \p_k^{n_k}. \]
Thus, we conclude that $\beta \in \big( \calO_K / \p_1^{n_1} \p_2^{n_2} \ldots \p_k^{n_k} \big)^\times$ and the order $\ord_{\p_1^{n_1} \p_2^{n_2} \ldots \p_k^{n_k}}( \beta )$ is well defined for any tuple $(n_1,n_2,\ldots,n_k) \in \N_0^k \setminus \{ \mathbf{0} \}$.

\begin{lemma}\label{lemma:order-prime}
  Let $\mathfrak{p}$ be a non-zero prime ideal of $\mathcal{O}_K$.
  Let $p \in \N$ be the unique prime contained in $\mathfrak{p}$, and let $e$ be the index of ramification of $\p$.
  For $\beta \in \calO_K \setminus \p$ with $|\beta| > 1$, there exists $n_0 \in \N$ such that \[ \ord_{\p^{n_0 + n}}(\beta) = p^{\lceil \frac{n}{e} \rceil} \cdot \ord_{\p^{n_0}}(\beta) \quad \forall n \in \N_0, \]
  where $\lceil x \rceil$ denotes the smallest integer not less than $x$.
\end{lemma}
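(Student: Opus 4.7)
The plan is to choose a sufficiently large ``stable'' index $n_0$ and then establish a lifting-the-exponent identity that pins down $d_n := \ord_{\p^n}(\beta)$ for all $n \ge n_0$. Throughout, let $v_\p$ denote the $\p$-adic valuation on $\calO_K$, normalized so that $v_\p(p) = e$, and write $\gamma_n := \beta^{d_n} - 1$; by definition $v_\p(\gamma_n) \ge n$.

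First I would show that the sequence $(d_n)_{n\ge 1}$ is strictly increasing infinitely often. It is nondecreasing because $\p^{n+1} \subset \p^n$ forces $d_n \mid d_{n+1}$; if it were eventually equal to some $D$, then $\beta^D - 1$ would lie in $\p^n$ for every $n$, but in the Dedekind domain $\calO_K$ any nonzero element has finite $\p$-adic valuation, so $\beta^D = 1$, contradicting $|\beta|>1$. Noting that $d_{n+1} > d_n$ is equivalent to $v_\p(\gamma_n) = n$, this set of ``stable'' indices is infinite, so I may choose one $n_0$ in it that additionally satisfies $n_0 > e/(p-1)$.

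The technical heart of the argument is the identity
\[
v_\p\bigl( (1+\gamma_{n_0})^m - 1 \bigr) \;=\; e\,v_p(m) + n_0 \qquad \forall\, m \in \N,
\]
where $v_p$ is the $p$-adic valuation on $\Z$. Expanding by the binomial theorem, the $k=1$ term $m\gamma_{n_0}$ has $\p$-adic valuation exactly $ev_p(m) + n_0$. For $k \ge 2$, the identity $k\binom{m}{k} = m\binom{m-1}{k-1}$ yields $v_p\bigl(\binom{m}{k}\bigr) \ge v_p(m) - v_p(k)$, so the $k$-th term has valuation at least $ev_p(m) - ev_p(k) + kn_0$. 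The choice $n_0 > e/(p-1)$ guarantees $(k-1)n_0 > ev_p(k)$ for every $k \ge 2$: the binding case is a prime power $k=p^j$, where $(p^j-1) \ge j(p-1)$ reduces the inequality to the threshold $n_0 > e/(p-1)$. Hence the $k=1$ term strictly dominates, and the non-archimedean property of $v_\p$ gives the displayed equality.

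Finally, since $d_{n_0}$ divides $d_{n_0+n}$, I write $d_{n_0+n} = m \cdot d_{n_0}$ for a positive integer $m$. The requirement $\beta^{m d_{n_0}} \equiv 1 \pmod{\p^{n_0+n}}$ translates through the identity above into $ev_p(m) + n_0 \ge n_0 + n$, i.e., $v_p(m) \ge \lceil n/e \rceil$; the smallest such $m$ is $p^{\lceil n/e \rceil}$, yielding the claimed formula. I expect the bookkeeping of the binomial estimate in the second step to be the main obstacle, and it is exactly there that the ceiling function in the statement appears: gaining one unit of exponent in $p$ produces exactly $e$ extra units of $\p$-adic valuation, so to absorb $n$ units of depth one must raise the order by $p^{\lceil n/e \rceil}$.
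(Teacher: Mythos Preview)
Your argument is correct and follows essentially the same ``lifting the exponent'' strategy as the paper's proof: pick a base level $n_0$ where $v_\p(\beta^{d_{n_0}}-1)=n_0$ is large enough, use the binomial expansion to show that multiplying the exponent by $p$ raises the $\p$-adic valuation of $\beta^{(\cdot)}-1$ by exactly $e$, and read off the order formula with the ceiling. The packaging differs slightly. The paper selects $n_0=v_\p(\beta^m-1)$ for $m=\ord_{\p^{e+1}}(\beta)$, which automatically gives $n_0\ge e+1$, and then proves the valuation claim by induction on the power of $p$, expanding only $(1+\gamma)^p$ at each step; this keeps the binomial estimate elementary (for $\ell\ge2$ one just needs $\gamma^\ell\in\p^{2(n_0+ke)}\subset\p^{n_0+(k+1)e+1}$). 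You instead prove the full identity $v_\p\bigl((1+\gamma_{n_0})^m-1\bigr)=ev_p(m)+n_0$ for all $m$ at once, which requires the sharper bound $v_p\binom{m}{k}\ge v_p(m)-v_p(k)$ and the threshold $n_0>e/(p-1)$. Your route buys a clean one-shot formula at the cost of the extra binomial-valuation bookkeeping; the paper's inductive route is more elementary but arrives at the same place.
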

\begin{proof}
  Write $m = \ord_{\p^{e+1}}(\beta)$. Then we have $(\beta^m -1) \in \p^{e+1}$, that is, $\p^{e+1} \mid (\beta^m -1)\calO_K$.
  Since $|\beta| > 1$, we have $\beta^m - 1 \ne 0$.
  Let $n_0$ be the largest positive integer $n$ such that $\p^n \mid (\beta^m -1)\calO_K$.
  Then we have $n_0 \ge e+1$ and $\ord_{\p^{n_0}}(\beta)=m$.

  We claim that
  \begin{equation}\label{eq:claim-1}
    \beta^{mp^k} - 1 \in \p^{n_0 + ke} \setminus \p^{n_0 + ke+1} \quad \forall k \in \N_0.
  \end{equation}
  We will prove the claim (\ref{eq:claim-1}) by induction on $k$.
  First, for $k=0$ it holds because of the maximality of $n_0$.
  Next, we assume that \[ \beta^{mp^k} - 1 \in \p^{n_0 + ke} \setminus \p^{n_0 + ke+1} \] for some $k \in \N_0$.
  So we can write \[ \beta^{mp^k} = 1 + \gamma \quad\text{with}\; \gamma \in \p^{n_0 + ke} \setminus \p^{n_0 + ke+1}. \]
  Then we have
  \begin{equation}\label{eq:k+1}
    \beta^{mp^{k+1}} = (\beta^{mp^k})^p = (1+\gamma)^p = 1 + p \gamma + \sum_{\ell=2}^{p} \binom{p}{\ell} \gamma^\ell.
  \end{equation}
  For $\ell\ge 2$, we have $\gamma^\ell \in \p^{(n_0 + ke)\ell} \subset \p^{2n_0 + ke} \subset \p^{n_0 + (k+1)e +1}$.
  It follows that
  \begin{equation}\label{eq:ell-2}
    \sum_{\ell=2}^{p} \binom{p}{\ell} \gamma^\ell \in \p^{n_0 + (k+1)e +1}.
  \end{equation}
  Note that $p \in \p^{e} \setminus \p^{e+1}$.
  Thus, we have $p\gamma \in \p^{n_0 + (k+1)e} \setminus \p^{n_0 + (k+1)e+1}$.
  Together with (\ref{eq:ell-2}) and (\ref{eq:k+1}), we conclude that \[ \beta^{mp^{k+1}} - 1 \in \p^{n_0 + (k+1)e} \setminus \p^{n_0 + (k+1)e+1}. \]
  That is, the claim (\ref{eq:claim-1}) holds for $k+1$.
  By induction on $k$, we prove the claim (\ref{eq:claim-1}).

  For any $k \in \N$, we first have $m = \ord_{\p^{n_0}}(\beta) \mid \ord_{\p^{n_0+ke}}(\beta)$.
  By (\ref{eq:claim-1}), we have \[ \beta^{mp^k} \equiv 1 \pmod{\p^{n_0+ke}} \;\;\text{and}\;\; \beta^{mp^{k-1}} \not\equiv 1 \pmod{\p^{n_0+ke}}. \]
  Note that $p$ is a prime.
  Thus, we conclude that
  \begin{equation}\label{eq:order-e}
    \ord_{\p^{n_0+ke}}(\beta) = m p^k \quad \forall k \in \N.
  \end{equation}

  For any $n \in \N$ with $e \nmid n$, write $n = k e + r$ where $k \in \N_0$ and $r \in \{1,2,\ldots, e-1\}$.
  Then we have $\ord_{\p^{n_0+ke}}(\beta) \mid \ord_{\p^{n_0+n}}(\beta)$ and $\ord_{\p^{n_0+n}}(\beta) \mid \ord_{\p^{n_0+(k+1)e}}(\beta)$.
  Note that $p$ is a prime.
  By (\ref{eq:order-e}) we obtain $\ord_{\p^{n_0+n}}(\beta) = mp^k \;\;\text{or} \;\; m p^{k+1}$.
  By (\ref{eq:claim-1}), we have $\beta^{mp^{k}} \not\equiv 1 \pmod{\p^{n_0+n}}$.
  Thus, we obtain that \[ \ord_{\p^{n_0+n}}(\beta) = m p^{k+1} = m p^{\lceil \frac{n}{e}\rceil}. \]
  Together with (\ref{eq:order-e}), we complete the proof.
\end{proof}


\begin{proposition}\label{prop:order}
  Let $\p_1, \p_2, \ldots, \p_k$ be distinct non-zero prime ideals of $\mathcal{O}_K$.
  For $1 \le j \le k$, let $p_j \in \N$ be the unique prime contained in $\mathfrak{p}_j$, and let $e_j$ be the index of ramification of $\p_j$.
  Let $\mathcal{P}=\big\{ p_j: 1 \le j \le k \big\}$ and for $p \in \mathcal{P}$, let $\mathcal{I}_p = \big\{ 1 \le j \le k: p_j = p \big\}$.
  Then for $\beta \in \calO_K \setminus \bigcup_{j=1}^k \p_j$ with $|\beta|>1$, there exists a constant $c_2 >0$ such that \[ \ord_{\p_1^{n_1} \p_2^{n_2} \ldots \p_k^{n_k}}( \beta )\ge c_2 \prod_{p \in \mathcal{P}} p^{\max\big\{ \big\lceil\frac{n_j}{e_j}\big\rceil: j \in \mathcal{I}_p \big\}}, \]
  for any tuple $(n_1,n_2,\ldots,n_k) \in \N_0^k \setminus \{ \mathbf{0} \}$.
\end{proposition}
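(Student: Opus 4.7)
My plan is to reduce to the single-prime-ideal case handled in Lemma \ref{lemma:order-prime} and then exploit the coprimality of the distinct rational primes in $\mathcal{P}$. First, I would apply Lemma \ref{lemma:order-prime} to each $\p_j$ separately, obtaining an integer $n_0^{(j)} \in \N$ such that, writing $m_j := \ord_{\p_j^{n_0^{(j)}}}(\beta)$, we have
\[ \ord_{\p_j^{n_0^{(j)}+n}}(\beta) = m_j \cdot p_j^{\lceil n/e_j \rceil} \quad \forall n \in \N_0. \]
In particular, whenever $n_j \ge n_0^{(j)}$, the prime power $p_j^{\lceil (n_j - n_0^{(j)})/e_j \rceil}$ divides $\ord_{\p_j^{n_j}}(\beta)$.

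Next I would invoke the Chinese Remainder Theorem decomposition recorded at the beginning of this section, which yields the elementary identity
\[ \ord_{\p_1^{n_1} \ldots \p_k^{n_k}}(\beta) \;=\; \mathrm{lcm}\Big( \ord_{\p_j^{n_j}}(\beta) : 1 \le j \le k \Big), \]
where the terms with $n_j = 0$ are interpreted as $1$ and are harmless. Since the rational primes in $\mathcal{P}$ are pairwise distinct, the $p$-adic valuation of this LCM, for each $p \in \mathcal{P}$, equals the maximum of the $p$-adic valuations of the individual orders $\ord_{\p_j^{n_j}}(\beta)$ over $j \in \mathcal{I}_p$. By the first step this valuation is at least
\[ \max\big\{ \lceil (n_j - n_0^{(j)})/e_j \rceil : j \in \mathcal{I}_p,\; n_j \ge n_0^{(j)} \big\}. \]

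Combining and multiplying over $p \in \mathcal{P}$ would give a lower bound of the correct shape, and the remaining task is to convert it into the stated inequality by absorbing all the shifts into the constant $c_2$. There are two pieces of bookkeeping: for $j \in \mathcal{I}_p$ with $n_j \ge n_0^{(j)}$, the elementary inequality $\lceil (n_j - n_0^{(j)})/e_j \rceil \ge \lceil n_j/e_j \rceil - \lceil n_0^{(j)}/e_j \rceil - 1$ costs only a factor depending on $n_0^{(j)}$ and $e_j$; and for $j \in \mathcal{I}_p$ with $n_j < n_0^{(j)}$, the corresponding quantity $p_j^{\lceil n_j/e_j \rceil}$ is bounded above by the constant $p_j^{\lceil n_0^{(j)}/e_j \rceil}$. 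Since all of these corrections depend only on the fixed data $\p_1,\ldots,\p_k$ and $\beta$, their product gives a single positive constant $c_2$ yielding the desired inequality.

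The main delicacy is precisely this last bookkeeping step, namely ensuring that the small-$n_j$ regime where Lemma \ref{lemma:order-prime} does not apply directly, together with the ceiling-function shifts, do not degrade the asymptotic shape of the bound; the point is simply that all such corrections are uniform in the tuple $(n_1,\ldots,n_k)$, so they are swept into $c_2$.
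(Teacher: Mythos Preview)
Your proposal is correct and follows essentially the same route as the paper: apply Lemma~\ref{lemma:order-prime} to each $\p_j$, use that $\ord_{\p_j^{n_j}}(\beta)$ divides $\ord_{\p_1^{n_1}\cdots\p_k^{n_k}}(\beta)$ (the paper states this divisibility directly rather than phrasing it as an LCM via CRT, but this is the same fact), and combine over the distinct rational primes in $\mathcal{P}$. The only cosmetic difference is that the paper absorbs your case analysis and ceiling-shift bookkeeping into a single clean statement, namely that for each $j$ there exists $m_j\in\N$ with $p_j^{\lceil n/e_j\rceil}\mid p_j^{m_j}\cdot\ord_{\p_j^{n}}(\beta)$ for all $n\in\N$, which then yields $c_2 = (p_1^{m_1}\cdots p_k^{m_k})^{-1}$ directly; your explicit bookkeeping accomplishes exactly the same thing.
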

\begin{proof}
  By Lemma \ref{lemma:order-prime}, for any $1 \le j \le k$, there exists $m_j \in \N$ such that
  \begin{equation}\label{eq:p-j}
    p_j^{\lceil \frac{n}{e_j} \rceil} \;\mid\; p_j^{m_j} \cdot \ord_{\p_j^{n}}(\beta) \quad \forall n \in \N.
  \end{equation}

  Fix a tuple $(n_1,n_2,\ldots,n_k) \in \N_0^k \setminus \{ \mathbf{0} \}$.
  Note that for $1 \le j \le k$, \[ \ord_{\p_j^{n_j}}(\beta) \; \mid \; \ord_{\p_1^{n_1} \p_2^{n_2} \ldots \p_k^{n_k}}( \beta ).  \]
  That is, \[  p_j^{m_j} \cdot \ord_{\p_j^{n_j}}(\beta) \; \mid \; p_j^{m_j} \cdot \ord_{\p_1^{n_1} \p_2^{n_2} \ldots \p_k^{n_k}}( \beta ). \]
  Together with (\ref{eq:p-j}), we obtain
  \[ p_j^{\lceil\frac{n_j}{e_j}\rceil} \;\mid\; p_j^{m_j} \cdot \ord_{\p_1^{n_1} \p_2^{n_2} \ldots \p_k^{n_k}}( \beta ) \quad \forall 1 \le j \le k. \]
  Note that all primes in $\mathcal{P}$ are distinct.
  So we have \[ \prod_{p \in \mathcal{P}} p^{\max\big\{ \big\lceil\frac{n_j}{e_j}\big\rceil: j \in \mathcal{I}_p \big\}} \; \mid \; p_1^{m_1} p_2^{m_2} \cdots p_k^{m_k} \cdot \ord_{\p_1^{n_1} \p_2^{n_2} \ldots \p_k^{n_k}}( \beta ). \]
  It follows that
  \[ \ord_{\p_1^{n_1} \p_2^{n_2} \ldots \p_k^{n_k}}( \beta ) \ge \frac{ 1 }{ p_1^{m_1} p_2^{m_2} \cdots p_k^{m_k} } \prod_{p \in \mathcal{P}} p^{\max\big\{ \big\lceil\frac{n_j}{e_j}\big\rceil: j \in \mathcal{I}_p \big\}}, \]
  as desired.
\end{proof}

\section{Proof of Theorem \ref{theorem-complex-plane}}\label{sec:proof}

In this section, we will prove Theorem \ref{theorem-complex-plane}.
Let $K$ be an imaginary quadratic field and let $\mathcal{O}_K$ be the ring of algebraic integers of $K$.

For $\alpha \in \mathcal{O}_K$ with $|\alpha| > 1$, recall that \[ \mathcal{D}_\alpha = \bigcup_{n=1}^\f \frac{\mathcal{O}_K}{\alpha^n}. \]
We write the ideal $\alpha \mathcal{O}_K$ as
\[ \alpha \mathcal{O}_K = \p_1^{b_1} \p_2^{b_2} \cdots \p_{\ell}^{b_\ell} \] a product of powers of distinct prime ideals.
For $z \in \mathcal{D}_\alpha$, there exists $n \in \N$ such that $\alpha^n z \in \calO_K$.
It follows that $z \p_1^{nb_1} \p_2^{nb_2} \cdots \p_{\ell}^{nb_\ell} \subset \calO_K$.
We define $\boldsymbol{n}_z$ to be a tuple $(n_1, n_2, \ldots, n_\ell) \in \N_0^\ell$ with smallest summation $n_1 + n_2 + \cdots + n_\ell$ such that
\[ z \p_1^{n_1} \p_2^{n_2} \cdots \p_{\ell}^{n_\ell} \subset \calO_K. \]
For any tuple $\boldsymbol{n}=(n_1, n_2, \ldots, n_\ell) \in \N_0^\ell$, define \[ \mathcal{D}_\alpha^{\n} := \big\{ z \in \mathcal{D}_\alpha: \n_z = (n_1, n_2, \ldots, n_\ell)\big\}. \]
Then the set $\mathcal{D}_\alpha$ can be written as disjoint unions of $\mathcal{D}_\alpha^{\n}$, i.e.,
\begin{equation}\label{eq:D-alpha-disjoint-union}
  \mathcal{D}_\alpha = \bigcup_{\n \in \N_0^\ell} \mathcal{D}_\alpha^{\n}.
\end{equation}

For $1 \le j \le \ell$, let $p_j \in \N$ be the unique prime contained in $\mathfrak{p}_j$, let $e_j$ be the index of ramification of $\p_j$, and let $f_j$ be the residual degree of $\p_j$.
Note that the degree of $K/\Q$ is $2$.
By \cite[Proposition 4.4]{book-ANT-2022}, we have $e_j \in \{1,2\}$ and $f_j \in \{1, 2\}$ for all $1 \le j \le \ell$.
Let $\mathcal{P}=\big\{ p_j: 1 \le j \le \ell \big\}$ and for $p \in \mathcal{P}$, let $\mathcal{I}_p = \big\{ 1 \le j \le \ell: p_j = p \big\}$.

\begin{proof}[Proof of Theorem \ref{theorem-complex-plane} {\rm(i)}]
  For any tuple $\n=(n_1, n_2, \ldots, n_\ell) \in \N_0^\ell$ and $z \in \mathcal{D}_\alpha^{\n}$, we have $ z \p_1^{n_1} \p_2^{n_2} \cdots \p_{\ell}^{n_\ell} \subset \calO_K$.
  Note that $p_j \in \p_j$ for all $1 \le j \le \ell$. So, $z p_1^{n_1} p_2^{n_2} \cdots p_{\ell}^{n_\ell} \in \calO_K$.
  It follows that \[ \mathcal{D}_\alpha^{\n} \subset \frac{\calO_K}{p_1^{n_1} p_2^{n_2} \cdots p_{\ell}^{n_\ell}}. \]
  This implies that the set $\mathcal{D}_\alpha^{\n}$ is uniformly discrete.
  Note that $S_{\beta,A}$ is a compact set.
  Thus, we conclude that \[ \#\big( \mathcal{D}_\alpha^{\n} \cap S_{\beta,A} \big) < +\f \quad \forall \n \in \N_0^\ell. \]
  In order to show $\#\big( \mathcal{D}_\alpha \cap S_{\beta,A} \big) < +\f$, by (\ref{eq:D-alpha-disjoint-union}) it suffices to find $n_0 \in \N$ such that for any tuple $\n=(n_1, n_2, \ldots, n_\ell) \in \N_0^\ell$ with $n_1 + n_2 + \cdots + n_\ell \ge n_0$, we have \[\mathcal{D}_\alpha^{\n} \cap S_{\beta,A}=\emptyset.\]

  Assume that $s = \dim_{\mathrm{H}} S_{\beta,A} < 1$.
  Choose $\ep>0$ such that $s+\ep < 1$.
  Let $c_1$ and $c_2$ be the constants appearing in Propositions \ref{lem:order-upper-bound} and  \ref{prop:order}, respectively.
  Take a large integer $n_0 \in \N$ such that
  \begin{equation}\label{eq:n-0}
    (\sqrt{2})^{n_0(1-s-\ep)/\ell} > \frac{c_1}{c_2}.
  \end{equation}
  Fix a tuple $\n=(n_1, n_2, \ldots, n_\ell) \in \N_0^\ell$ with $n_1 + n_2 + \cdots + n_\ell \ge n_0$.
  Suppose that $\mathcal{D}_\alpha^{\n} \cap S_{\beta,A}\ne\emptyset$, and we take $z \in \mathcal{D}_\alpha^{\n} \cap S_{\beta,A}$.

  For $p \in \mathcal{P}$, we have \[ p \in \prod_{j \in \mathcal{I}_p} \p_j^{e_j}, \]
  and hence, \[ p^{\max\big\{ \big\lceil\frac{n_j}{e_j}\big\rceil: j \in \mathcal{I}_p \big\}} \in \prod_{j \in \mathcal{I}_p} \p_j^{n_j}. \]
  It follows that \[ \prod_{p \in \mathcal{P}} p^{\max\big\{ \big\lceil\frac{n_j}{e_j}\big\rceil: j \in \mathcal{I}_p \big\}} \in \p_1^{n_1} \p_2^{n_2} \ldots \p_k^{n_k}. \]
  Note that $z \p_1^{n_1} \p_2^{n_2} \cdots \p_{\ell}^{n_\ell} \subset \calO_K$.
  So we have \[ z \prod_{p \in \mathcal{P}} p^{\max\big\{ \big\lceil\frac{n_j}{e_j}\big\rceil: j \in \mathcal{I}_p \big\}} \in \calO_K.\]
This means that we can write $z$ as \[ z = \frac{v}{ \prod_{p \in \mathcal{P}} p^{\max\big\{ \big\lceil\frac{n_j}{e_j}\big\rceil: j \in \mathcal{I}_p \big\}} } \quad\text{with}\; v \in \calO_K. \]
Since $z \in S_{\beta,A}$, by Proposition \ref{lem:order-upper-bound}, the complex number $z$ has an eventually periodic coding $a_1 a_2 \ldots a_k (a_{k+1} a_{k+2} \ldots a_{k+m})^\f\in A^\N$ with
\begin{equation}\label{eq:m-upper}
  m \le c_1 \bigg( \prod_{p \in \mathcal{P}} p^{\max\big\{ \big\lceil\frac{n_j}{e_j}\big\rceil: j \in \mathcal{I}_p \big\}} \bigg)^{s+\ep}.
\end{equation}
Note that $A \subset \calO_K$. We have
\begin{align*}
  z & = \sum_{j=1}^{k} \frac{a_j}{\beta^j} + \Big( 1 + \frac{1}{\beta^m} + \frac{1}{\beta^{2m}} + \cdots \Big) \sum_{j=1}^{m} \frac{a_{k+j}}{\beta^{k+j}} \\
  & = \sum_{j=1}^{k} \frac{a_j}{\beta^j} + \frac{\beta^m}{\beta^m -1} \sum_{j=1}^{m} \frac{a_{k+j}}{\beta^{k+j}} \\
  & \in \frac{\calO_K}{\beta^k(\beta^m -1)}.
\end{align*}
That is, $z \beta^k(\beta^m-1) \in \calO_K$.

We claim that
\begin{equation}\label{eq:claim-m}
  \beta^m -1 \in \p_1^{n_1} \p_2^{n_2} \cdots \p_{\ell}^{n_\ell}.
\end{equation}
Write \[ \beta^k(\beta^m-1) \calO_K = \prod_{j=1}^{\ell} \p_j^{d_j} \prod_{j=\ell+1}^r \p_j^{d_j} \]
as a product of powers of distinct prime ideals, where $d_1, \ldots, d_\ell \in \N_{0}$ and $d_{\ell+1}, \ldots, d_r\in \N$.
Then we have
\[ \big( \p_1^{n_1} \p_2^{n_2} \cdots \p_{\ell}^{n_\ell} \big) + \Big( \beta^k(\beta^m-1) \calO_K \Big) = \prod_{j=1}^\ell \mathfrak{p}_{j}^{\min\{n_j,d_j\}}. \]
It follows that \[ \big( z \p_1^{n_1} \p_2^{n_2} \cdots \p_{\ell}^{n_\ell} \big) + \Big( z \beta^k(\beta^m-1) \calO_K \Big) = z\prod_{j=1}^\ell \mathfrak{p}_{j}^{\min\{n_j,d_j\}}. \]
Note that $z\p_1^{n_1} \p_2^{n_2} \cdots \p_{\ell}^{n_\ell} \subset \mathcal{O}_K$ and $ z\beta^k(\beta^m-1) \in  \calO_K$.
We obtain \[ z\prod_{j=1}^\ell \mathfrak{p}_{j}^{\min\{n_j,d_j\}} \subset \calO_K. \]
By the minimality of $\n_z$, we have $d_j \ge n_j$ for all $1 \le j \le \ell$.
It follows that \[ \p_1^{n_1} \p_2^{n_2} \cdots \p_{\ell}^{n_\ell} \; \mid \; \beta^k(\beta^m-1) \calO_K. \]
Since $\alpha$ and $\beta$ are relatively prime, we conclude that \[ \p_1^{n_1} \p_2^{n_2} \cdots \p_{\ell}^{n_\ell} \; \mid \; (\beta^m-1) \calO_K, \]
which implies the claim (\ref{eq:claim-m}).

By (\ref{eq:claim-m}), we have $\ord_{\p_1^{n_1} \p_2^{n_2} \cdots \p_{\ell}^{n_\ell}}(\beta) \mid m$.
By Proposition \ref{prop:order}, we obtain
\begin{equation}\label{eq:m-lower}
  m \ge \ord_{\p_1^{n_1} \p_2^{n_2} \cdots \p_{\ell}^{n_\ell}}(\beta) \ge c_2 \prod_{p \in \mathcal{P}} p^{\max\big\{ \big\lceil\frac{n_j}{e_j}\big\rceil: j \in \mathcal{I}_p \big\}}.
\end{equation}
Together with (\ref{eq:m-upper}), we obtain \[ \bigg( \prod_{p \in \mathcal{P}} p^{\max\big\{ \big\lceil\frac{n_j}{e_j}\big\rceil: j \in \mathcal{I}_p \big\}} \bigg)^{1-s-\ep} \le \frac{c_1}{c_2}. \]
Note that $p \ge 2$ for all $p \in \mathcal{P}$, and $e_j = 1 \;\text{or}\; 2$ for all $1 \le j \le \ell$.
We have
\begin{align*}
  \prod_{p \in \mathcal{P}} p^{\max\big\{ \big\lceil\frac{n_j}{e_j}\big\rceil: j \in \mathcal{I}_p \big\}} & \ge 2^{\max\big\{ \big\lceil\frac{n_j}{e_j}\big\rceil:\; 1 \le j \le \ell \big\}} \\
  & \ge 2^{\max\big\{ \frac{n_j}{2}: \;1 \le j \le \ell \big\}} \\
  & \ge 2^{\frac{n_1 + n_2 +\cdots+ n_\ell}{2\ell}} \\
  & \ge (\sqrt{2})^{n_0/\ell}.
\end{align*}
Thus, we get \[ (\sqrt{2})^{n_0(1-s-\ep)/\ell} \le \frac{c_1}{c_2}, \]
which contradicts (\ref{eq:n-0}).

Therefore, for any tuple $\n=(n_1, n_2, \ldots, n_\ell) \in \N_0^\ell$ with $n_1 + n_2 + \cdots + n_\ell \ge n_0$, we have $\mathcal{D}_\alpha^{\n} \cap S_{\beta,A}=\emptyset$.
The proof is complete.
\end{proof}

\begin{proof}[Proof of Theorem \ref{theorem-complex-plane} {\rm(ii)}]
  The proof is essentially identical with that of Theorem \ref{theorem-complex-plane} {\rm(i)}.
  We need a better estimation of upper bound in $(\ref{eq:m-upper})$.

  Assume that $\calO_K$ is a unique factorization domain.
  Then every prime ideal is a principal ideal.
  This means that for $1 \le j \le \ell$, we can find $\alpha_j \in \calO_K$ such that $\p_j = \alpha_j \calO_K$.
  Thus, for $\n=(n_1, n_2, \ldots, n_\ell) \in \N_0^\ell$, each element $z \in \mathcal{D}_\alpha^{\n}$ has the form
  \begin{equation}\label{eq:z-new-form}
    z = \frac{v}{\alpha_1^{n_1} \alpha_2^{n_2} \cdots \alpha_\ell^{n_\ell}} \quad\text{for some}\; v \in \calO_K.
  \end{equation}
  We also assume that $\overline{\alpha}$ and $\alpha$ are relatively prime.
  This implies that all primes $p_1, p_2, \ldots, p_\ell$ are distinct. And we have $\p_j \ne \overline{\p_j}$ for all $1 \le j \le \ell$, where $\overline{\p_j}$ is the conjugate prime ideal of $\p_j$.
  By \cite[Theorem 4.11]{book-ANT-2022}, we have $e_j = f_j = 1$ for all $1 \le j \le \ell$.
  It follows that
  \begin{equation}\label{eq:p-j-alpha-j}
    p_j = N(\p_j) = N(\alpha_j \calO_K) = |\alpha_j|^2
  \end{equation}
  The remaining argument is the same with that in the proof of Theorem \ref{theorem-complex-plane} (i).

  Assume that $s = \dim_{\mathrm{H}} S_{\beta,A} < 2$.
  Choose $\ep>0$ such that $s+\ep < 2$.
  Take a large integer $n_0 \in \N$ such that
  \begin{equation}\label{eq:n-0-2}
    (\sqrt{2})^{n_0(2-s-\ep)} > \frac{c_1}{c_2},
  \end{equation}
  where $c_1$ and $c_2$ are the constants appearing in Propositions \ref{lem:order-upper-bound} and \ref{prop:order}, respectively.
  Fix a tuple $\n=(n_1, n_2, \ldots, n_\ell) \in \N_0^\ell$ with $n_1 + n_2 + \cdots + n_\ell \ge n_0$.
  Suppose that $\mathcal{D}_\alpha^{\n} \cap S_{\beta,A}\ne\emptyset$, and we take $z \in \mathcal{D}_\alpha^{\n} \cap S_{\beta,A}$.

  By Proposition \ref{lem:order-upper-bound} and (\ref{eq:z-new-form}), the complex number $z$ has an eventually periodic coding $a_1 a_2 \ldots a_k (a_{k+1} a_{k+2} \ldots a_{k+m})^\f$ with
  \begin{equation}\label{eq:m-upper-2}
    m \le c_1|\alpha_1^{n_1} \alpha_2^{n_2} \cdots \alpha_\ell^{n_\ell}|^{s+\ep} = c_1 (p_1^{n_1} p_2^{n_2} \cdots p_\ell^{n_\ell} \big)^{(s+\ep)/2},
  \end{equation}
  where the last equality follows from (\ref{eq:p-j-alpha-j}).
  As we have done in the proof of Theorem \ref{theorem-complex-plane} (i), we have \[ \beta^m - 1 \in  \p_1^{n_1} \p_2^{n_2} \cdots \p_{\ell}^{n_\ell}. \]
  Note that all primes $p_1, p_2, \ldots, p_\ell$ are distinct, and $e_j = 1$ for all $1 \le j \le \ell$.
  By Proposition \ref{prop:order}, we have
  \[ m \ge c_2 p_1^{n_1} p_2^{n_2} \cdots p_\ell^{n_\ell}. \]
  Together with (\ref{eq:m-upper-2}), we obtain \[ (p_1^{n_1} p_2^{n_2} \cdots p_\ell^{n_\ell}\big)^{(2-s-\ep)/2} \le \frac{c_1}{c_2}. \]
  It follows that \[ (\sqrt{2})^{n_0(2-s-\ep)} \le (p_1^{n_1} p_2^{n_2} \cdots p_\ell^{n_\ell}\big)^{(2-s-\ep)/2} \le \frac{c_1}{c_2},  \]
  which contradicts (\ref{eq:n-0-2}).

  Thus, for any tuple $\n=(n_1, n_2, \ldots, n_\ell) \in \N_0^\ell$ with $n_1 + n_2 + \cdots + n_\ell \ge n_0$, we have $\mathcal{D}_\alpha^{\n} \cap S_{\beta,A}=\emptyset$.
  Therefore, by (\ref{eq:D-alpha-disjoint-union}), we conclude that $\#\big( \mathcal{D}_\alpha \cap S_{\beta,A} \big) < +\f$.
\end{proof}

\section*{Acknowledgements}
W.~Li was supported by NSFC No.~12471085 and Science and Technology Commission of Shanghai Municipality (STCSM) No. 22DZ2229014.
Z.~Wang was supported by NSFC No.~12501110 and the China Postdoctoral Science Foundation No.~2024M763857.

\bibliographystyle{abbrv}
\bibliography{Rational-in-Cantor}

\end{document}